\newtheorem{theorem}{Theorem}[section]
\newtheorem{proposition}[theorem]{Proposition}
\theoremstyle{definition}
\newtheorem{definition}[theorem]{Definition}
\newtheorem{remark}[theorem]{Remark}
\DeclareMathOperator{\sinc}{sinc}
\DeclareMathOperator\supp{supp}
\DeclareMathOperator\Span{span}
\DeclareMathOperator\tr{tr}
\numberwithin{equation}{section}
\def\H{{\mathbb{H}}}
\def\L{{L^2(\mathbb{H})}}
\def\N{{\mathbb{N}}}
\def\h{{\mathcal{H}}}
\def\R{{\mathbb{R}}}
\def\b{{\mathcal{B}_2}}
\def\Z{{\mathbb{Z}^n}}
\def\z{{\mathbb{Z}}}
\def\c{{\mathbb{C}}}
\def\p{{\phi}}
\newcommand\numberthis{\addtocounter{equation}{1}\tag{\theequation}}
\newcommand{\be}{\begin{equation}}
\newcommand{\ee}{\end{equation}}
\begin{document}
\title{B-splines on the Heisenberg group}

\author{S. R. Das}
\address{School of Mathematical Sciences, NISER Bhubaneswar, Jatni, Odisha 752050, India}
\email{santiranjandas100@gmail.com}
\author{P. Massopust}
\address{Center of Mathematics, Technical University of Munich, Germany}
\email{massopust@ma.tum.de}

\author{R. Radha}
\address{Department of Mathematics, Indian Institute of Technology, Madras, India}
\email{radharam@iitm.ac.in}
\subjclass[2010]{Primary 42C15; Secondary 41A15, 43A30}



\keywords{$B$-splines; Heisenberg group; Gramian; {\it Hilbert-Schmidt} operator; {\it Riesz} sequence; Moment problem; Oblique dual; {\it Weyl} transform.}
\begin{abstract}
The aim of this paper is to introduce a class of $B$-splines $\p_n$ on the Heisenberg group $\H$ and prove certain fundamental properties. The non-commutative group structure of $\H$ leads to very different results for $\p_n$ from those of the classical $B$-splines on the real line $\R$. 
\end{abstract}
\maketitle
\section{Introduction and background}
B-splines are an important tool in approximation and interpolation theory and have been applied to a variety of different settings ranging from numerical analysis to computer-aided design and computer graphics. The Heisenberg group initially originating in quantum mechanics is ubiquitous in mathematics. It plays an important role in such diverse areas as representation theory of nilpotent groups, the theory of partial differential equations, ergodic theory, and classical invariant theory. (See \cite{Rowe} for the omnipresence of the Heisenberg group in harmonic analysis.) The non-commutativity of the Heisenberg group provides a wide range of tools for harmonic analysis and encapsulates rich and beautiful mathematical structures. 

Recently, multiresolution structures and wavelet-type systems constructed on the Heisenberg group have been investigated (cf. for instance, \cite{aratijmpa,araticol,D,LP,Mayeli}) and B-splines have been defined on locally compact abelian groups \cite[Chapter 21.5]{C} for the purpose of constructing explicit Gabor frames for such groups. 

\subsection{Rudimentaries of B-splines}
In this paper, we consider B-splines defined in a non-abelian setting and investigate their properties. For this purpose, we recall the definition of spline and B-spline on the real line $\R$. 

Let $X := \{x_0 < x_1 < \cdots < x_k<x_{k+1}\}$ be a set of knots on the real line $\R$. A spline function, or for short a spline, of order $n$ on $[x_0,x_{k+1}]$ with knot set $X$ is a function $f: [x_0,x_{k+1}]\to\mathbb{R}$ such that
\begin{enumerate}
\item[(i)] on each interval $(x_{i-1},x_i)$, $i = 0,1,\ldots, k+1$, $f$ is a polynomial of order at most $n$;
\item[(ii)]   $f\in C^{n-2}[x_0,x_{k+1}]$. In the case $n = 1$, we define $C^{-1}[x_0,x_{k+1}]$ to be the space of piecewise constant functions on $[x_0,x_{k+1}]$.
\end{enumerate}
The function $f$ is called a {cardinal spline} if the knot set $X$ is a contiguous subset of $\Z$. As a spline of order $n$ contains $n(k+1)$ free parameters on $[x_0,x_{k+1}]$ and has to satisfy $n-1$ differentiability conditions at the $k$ interior knots $x_1, \ldots, x_k$, the set ${S}_{X,n}$ of all spline functions $f$ of order $n$ over the knot set $X$ forms a real vector space of dimension $n+k$. As the space of cardinal splines of order $n$ over a finite knot set $X$ is finite-dimensional, a convenient and powerful basis for ${S}_{X,n}$ is given by the family of polynomial cardinal B-splines introduced by Curry \& Schoenberg \cite{CS}. For a detailed study of splines and wavelets we refer to \cite{Ch} and for splines and fractal functions we refer to \cite{peter}.

We now present one (equivalent) definition of cardinal B-spline suited for our purposes and list some of their properties. It is these properties which we like to extend to the Heisenberg setting. 
\begin{definition}\label{16734}
Let $\chi_{[0,1]}$ denote the characteristic function of $[0,1]$. For $n\in \N$, set
\begin{align*}
&B_{1}:[0,1]\to [0,1], \quad x\mapsto \chi_{[0,1]}(x);\\
&B_{n} := B_{n-1}\ast B_{1}, \quad n\geq 2,\quad n\in \N.\numberthis \label{1}
\end{align*}
Then $B_n$ is called a \emph{(cardinal) polynomial B-spline of order $n$}.
\end{definition}
The adjective ``cardinal'' refers to the fact that $B_n$ is defined over the set of knots $\{0,1, \ldots, n\}$.
%
In particular, the definition implies that
\begin{enumerate}[(i)]
\item $\supp B_n = [0,n]$. 
\item $B_n > 0$ on $(0,n)$.
\item The \emph{Fourier} transform of $B_n$ is given by $\widehat{B}_n(\omega)=\left(\dfrac{1-e^{-2\pi i\omega}}{2\pi i\omega}\right)^n$.
\item Partition of Unity: $\sum\limits_{k\in \Z} B_n(x-k) = 1$, for all $x\in \R$.
\item The collection $\{B_n : n\in \N\}$ constitutes a family of piecewise polynomial functions with $B_n\in C^{n-2} [0,n]$, $n\in \N$. In the case $n=1$, we define $C^{-1}[0,1]$ as the family of piecewise constant functions on $[0,1]$.
\item $\int\limits_{-\infty}^\infty B_n(x)~dx=1$, for each $n\in\N$.
\item Distributional definition of cardinal B-spline: For any continuous function $f:\R\rightarrow\c$,
\begin{align*}
\int_{-\infty}^\infty f(x)B_n(x)~dx=\int_{[0,1]^n}f(x_1+\cdots+x_n)~dx_1\cdots dx_n.
\end{align*}
\item For $n\geq 2$, $B_n^\prime(x)=(\nabla B_{n-1})(x):=B_{n-1}(x)-B_{n-1}(x-1)$.  
\end{enumerate}
\vskip 4pt

Recently in \cite{santisecond}, twisted $B$-splines were introduced on the complex plane, based on a non-standard convolution, called a twisted convolution on $\R^2$. These twisted B-splines possess higher regularity than the classical $B$-splines on $\R^2$. The results obtained in that paper motivated us to introduce $B$-splines on the Heisenberg group and study their fundamental properties.\\

\subsection{Some remarks about the Heisenberg group}
The Heisenberg group $\mathbb{H}$ is a nilpotent {\it Lie} group whose underlying manifold is $\R\times\R\times\R$ endowed with a group operation defined by 
\[
(x,y,t)(x^\prime,y^\prime,t^\prime):=(x+x^\prime,y+y^\prime,t+t^\prime+\tfrac{1}{2}(x^\prime y-y^\prime x)),
\]
and for which a {\it Haar} measure is {\it Lebesgue} measure $dx\,dy\,dt$ on $\R^3$. By the {\it Stone--von Neumann} theorem, every infinite dimensional irreducible unitary representation on $\mathbb{H}$ is unitarily equivalent to the representation $\pi_\lambda$ given by
\begin{align*}
\pi_\lambda(x,y,t)\p(\xi)=e^{2\pi i\lambda t}e^{2\pi i\lambda(x\xi+\frac{1}{2}xy)}\p(\xi+y),
\end{align*}
for $\p\in L^2(\R)$ and $\lambda\in \R^\times:=\R\setminus\{0\}$. This representation $\pi_\lambda$ is called the {\it Schr\"{o}dinger} representation of the Heisenberg group. The group $Fourier$ transform of $f\in L^1(\H)$ is defined by
\begin{equation}\label{2}
\widehat{f}(\lambda)=\int_\H f(x,y,t)\hspace{1 mm}\pi_\lambda (x,y,t)\hspace{1 mm}dxdydt,\hspace{3 mm}\lambda\in\R^\times,
\end{equation}
where the integral is a $Bochner$ integral acting on the $Hilbert$ space $L^2(\R)$. The group {\it Fourier} transform is an isometric isomorphism between $\L$ and $L^2(\R^\times,\b;d\mu)$, where  $d\mu (\lambda)$ is the Plancherel measure $|\lambda |d\lambda$ and $\b$ is the {\it Hilbert} space of {\it Hilbert-Schmidt} operators on $L^2(\R)$ with the inner product $(T,S) :=\tr(TS^*)$. We can write \eqref{2} as
\begin{equation*}
\widehat{f}(\lambda)=\int_{\R^2}f^\lambda(x,y)\pi_\lambda(x,y,0)\ dxdy\ ,
\end{equation*}
where
\begin{equation*}
f^\lambda(x,y)=\int_{\R}f(x,y,t)e^{2\pi i\lambda t}\ dt.
\end{equation*} 
Notice that the function $f^\lambda(x,y)$ is the inverse {\it Fourier} transform of $f$ with respect to the $t$ variable. For $g\in L^1(\R^2)$, let
\begin{equation*}
W_{\lambda}(g)=\int_{\R^2}g(x,y)\pi_{\lambda}(x,y,0)\ dxdy,\ \ \text{for $\lambda\in\R^\times$.}
\end{equation*}
Using this operator, we can rewrite $\widehat{f}(\lambda)$ as $W_{\lambda}(f^{\lambda})$. When $f,g\in\L$, one can show that $f^{\lambda}, g^{\lambda}\in L^2(\R^2)$ and $W_{\lambda}$ satisfies
\begin{equation}\label{3}
\big\langle f^\lambda,g^\lambda\big\rangle_{L^2(\R^2)}=|\lambda|\big\langle W_\lambda (f^\lambda),W_\lambda (g^\lambda)\big\rangle_{\b}.
\end{equation}
In particular, when $\lambda=1$, $W_{\lambda}$ is called the {\it Weyl} transform. Moreover, $\widehat{f}(\lambda)$ is an integral operator whose kernel $K_{f^\lambda}^\lambda$ is given by
\begin{align*}
K_{f^\lambda}^\lambda(\xi,\eta)=\int_\R f^\lambda(x,\eta-\xi)e^{\pi i\lambda x(\xi+\eta)}~dx.
\end{align*}

For $f,g\in L^1(\mathbb{H})$, the group convolution of $f$ and $g$ is defined by
\begin{align}\label{hconv}
f*g(x,y,t):=\int_{\mathbb{H}}f\big((x,y,t)(u,v,s)^{-1}\big)g(u,v,s)\ du\,dv\,ds.
\end{align}

Then $\widehat{f*g}(\lambda)=\widehat{f}(\lambda)\widehat{g}(\lambda)$, $\lambda\in\R^*$. Notice that the group Fourier transform is an operator valued function and hence the right-hand side denotes the composition of two operators. Under this group convolution, $L^1(\H)$ becomes a non-commutative \emph{Banach} algebra.\\

For $(u,v,s)\in\H$, the left translation operator $L_{(u,v,s)}$ is defined by 
\[
L_{(u,v,s)}f(x,y,t) :=f((u,v,s)^{-1}(x,y,t)),\hspace{1mm}\forall\  (x,y,t)\in\H,
\]
which is a unitary operator on $\L$. Using the definitions of the left translation operator and convolution, one can show that
\begin{equation}\label{28}
L_{(u,v,s)}(f*g)=(L_{(u,v,s)}f)*g.
\end{equation}

Recall that for a locally compact group $G$, a lattice $\Gamma$ in $G$ is defined to be a discrete subgroup of $G$ which is co-compact. The standard lattice in $\H$ is taken to be $\Gamma=\{(2k,l,m):k,l,m\in\z\}$. For a study of analysis on the Heisenberg group we refer to \cite{follandphase, thangavelu}.\\

Let $0\neq\h$ denote a separable Hilbert space.
\begin{definition}
A sequence of the form $\{Ue_k:~k\in\N\}$, where $\{e_k:~k\in\N\}$ is an orthonormal basis of $\h$ and $U$ is a bounded invertible operator on $\h$, is called a Riesz basis. If $\{f_k:~k\in\N\}$ is a Riesz basis for $\overline{\Span\{f_k:~k\in\N\}}$, then it is called a Riesz sequence. 
\end{definition}
Equivalently, $\{f_k:~k\in\N\}$ is said to be a Riesz sequence if there exist constants $A,B>0$ such that
\begin{align*}
A\|\{c_{k}\}\|_{\ell^2(\N)}^2\leq\bigg\|\sum_{k\in\N}c_kf_k\bigg\|^2\leq B\|\{c_{k}\}\|_{\ell^2(\N)}^2,
\end{align*}
for all finite sequences $\{c_{k}\}$. For a detailed study of {\it Riesz} basis, we refer to \cite{C}.\\

The next result gives an important property of the translates of the classical cardinal B-splines $B_n$ on the real line $\R$.

\begin{theorem}\cite[Theorem 9.2.6]{C}\label{f}
Let $T_k$, $k\in\z$, denote the translation operator $T_k f(y)=f(y-k)$, $y\in\R$. Then, for each $n\in \N$, the sequence $\{T_kB_n\}_{k\in \z}$ is a Riesz sequence.
\end{theorem}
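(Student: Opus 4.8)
The plan is to deduce the result from the periodization criterion recorded as Theorem~\ref{h}, applied to $h=B_n$. By that theorem, $\{T_kB_n\}_{k\in\z}$ is a Riesz sequence precisely when the $1$-periodic function
\[
\Phi_n(\lambda):=\sum_{k\in\z}\big|\widehat{B}_n(\lambda+k)\big|^2
\]
is bounded above and below by positive constants for a.e.\ $\lambda\in(0,1]$. Thus the entire argument reduces to controlling $\Phi_n$ from both sides, and the explicit formula $\widehat{B}_n(\omega)=\big(\tfrac{1-e^{-2\pi i\omega}}{2\pi i\omega}\big)^n$ from property~(iii) will be the only input I need about $B_n$.

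For the upper bound I would use the elementary decay estimate $|\widehat{B}_n(\omega)|\le(\pi|\omega|)^{-n}$ for $\omega\neq0$, which follows from $|1-e^{-2\pi i\omega}|\le2$. Since $n\ge1$ gives $2n\ge2$, the tail $\sum_{|k|\ge2}|\widehat{B}_n(\lambda+k)|^2$ is dominated uniformly on $\lambda\in[0,1]$ by a convergent series (a Weierstrass $M$-test with majorant $\sim|k|^{-2n}$). Hence the series defining $\Phi_n$ converges uniformly, so $\Phi_n$ is continuous and $1$-periodic, and in particular $\Phi_n\le B:=\max_{[0,1]}\Phi_n<\infty$.

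The lower bound is the crux, and here I would argue by locating the zeros of $\widehat{B}_n$. From the closed form, $\widehat{B}_n(\omega)=0$ exactly when $1-e^{-2\pi i\omega}=0$ with $\omega\neq0$, i.e.\ on $\z\setminus\{0\}$, while the removable singularity at the origin gives $\widehat{B}_n(0)=1$ (consistent with $\int B_n=1$ from property~(vi)). Now fix any $\lambda$ and look at the coset $\lambda+\z$: if $\lambda\in\z$ this coset contains $0$, so the $k$ with $\lambda+k=0$ contributes the term $|\widehat{B}_n(0)|^2=1$ and $\Phi_n(\lambda)\ge1$; if $\lambda\notin\z$ the coset contains no integer at all, so \emph{every} term $|\widehat{B}_n(\lambda+k)|^2$ is strictly positive and again $\Phi_n(\lambda)>0$. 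Either way $\Phi_n>0$ at every point, and since $\Phi_n$ is continuous and $1$-periodic it attains a strictly positive minimum $A:=\min_{[0,1]}\Phi_n>0$ on the compact fundamental domain. Together with the upper bound this verifies the hypothesis of Theorem~\ref{h}.

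The main obstacle is precisely the lower bound: a priori one must rule out that the zeros of $\widehat{B}_n$—which are all of $\z\setminus\{0\}$—conspire to annihilate the whole periodization on some coset. What saves the argument is the single non-vanishing value $\widehat{B}_n(0)=1$ together with the fact that a coset $\lambda+\z$ either meets $\z$ only at $0$ or avoids $\z$ entirely; no coset can consist solely of nonzero integers. The only technical care needed beyond this is justifying that $\Phi_n$ is genuinely continuous, so that pointwise positivity upgrades to a uniform lower bound via compactness, which the uniform convergence from the decay estimate supplies.
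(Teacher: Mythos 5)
Your proof is correct, and it follows the same route as the proof in the cited reference (the paper itself gives no proof of Theorem~\ref{f}; it simply quotes \cite[Theorem 9.2.6]{C}): apply the periodization criterion of Theorem~\ref{h} to $h=B_n$, using the closed form $\widehat{B}_n(\omega)=e^{-\pi i n\omega}\sinc^n(\omega)$ from property~(iii). Where you differ is only in how the two bounds are extracted. The reference is quantitative: for the upper bound it uses $|\widehat{B}_n|=|\sinc|^n\leq|\sinc|=|\widehat{B}_1|$, so that $\Phi_n\leq\Phi_1\equiv 1$ (the periodization of $B_1$ is identically $1$, since $\{T_kB_1\}_{k\in\z}$ is orthonormal); for the lower bound it keeps only the single term with $\lambda+k\in\left[-\tfrac12,\tfrac12\right]$ and uses that $\sinc$ is even and decreasing on $\left[0,\tfrac12\right]$ to get $\Phi_n(\lambda)\geq\sinc^{2n}\left(\tfrac12\right)=(2/\pi)^{2n}$. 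You replace both steps by a soft argument: the decay estimate $|\widehat{B}_n(\omega)|\leq(\pi|\omega|)^{-n}$ makes the tail converge uniformly on $[0,1]$, so $\Phi_n$ is continuous (the finitely many remaining terms are continuous thanks to the removable singularity at the origin), pointwise positivity follows from the fact that the zero set of $\widehat{B}_n$ is exactly $\z\setminus\{0\}$ while $\widehat{B}_n(0)=1$, and compactness of $[0,1]$ upgrades pointwise positivity to a uniform lower bound. Both arguments are valid; the explicit route buys concrete Riesz bounds $A=(2/\pi)^{2n}$ and $B=1$ (showing in particular how the lower bound degrades with $n$), while your compactness argument involves less computation and applies verbatim to any generator whose Fourier transform is continuous, suitably decaying, and vanishes on no full coset $\lambda+\z$.
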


Note that the above result in particular implies that $\{T_kB_n\}_{k\in \z}$ is a \emph{Riesz} basis for its closed linear span in $L^2(\R)$.
\vskip 6pt

The structure of this paper is as follows. In Section 2, we introduce $B$-splines $\p_n$ on the Heisenberg group $\H$ and show that the system of left translates $\{L_{(2k,l,m)}\p_n:k,l,m\in\z\}$ possesses an upper \emph{Riesz} bound. In the next section, we prove certain integral properties of $\p_n$ such as (vi) and (vii) of Definition \ref{16734} mentioned for the $B$-splines on $\R$. Furthermore, we prove a partition-of-unity like property for $\p_n$. In Section 4, we establish that the support of $\p_n$ can be included in a box unlike specifying the exact support of classical $B$-splines $B_n$  and also evaluate the kernel of the integral operator $\widehat{\p_n}(\lambda)$, where $\widehat{\p_n}$ denotes the group Fourier transform of $\p_n$, which is operator valued unlike the classical Fourier transform. In Section 5, we look at the action of the Heisenberg vector fields on $\p_n$ and obtain the formulae in terms of finite differences of discrete Heisenberg right translations. Finally, in Section 6, we establish that unlike the classical cardinal B-splines, the $\p_n$ do not possess a symmetry property. We wish to mention that the non-commutativity of the group structure leads to complications in the computations.

\section{Definition and a note on a system of left translates of $B$-splines on $\H$}
\begin{definition}
Let $\phi_1 :=\frac{1}{\sqrt{2}}\chi_{[0,2]\times[0,1]\times[0,1]}$. Define $\phi_n= \underset{i = 1}{\overset{n}{*}}\phi_1$ where $*$ denotes the convolution as defined in \eqref{hconv}. We refer to $\phi_n$ as an $n$-th order $B$-spline on the Heisenberg group.
\end{definition}
We can write $\phi_{n+1}$ in terms of $\phi_n$ as follows:
\begin{align}\label{5}
\phi_{n+1}(x,y,t)=\frac{1}{\sqrt{2}}\int_{u=0}^2\int_{v=0}^1\int_{s=0}^1\phi_n(x-u,y-v,t-s+\tfrac{1}{2}(vx-uy))~dsdvdu,\ \ n\geq 1.
\end{align}
It is important to mention here that although it is difficult to write $\phi_2$ explicitly, we can compute $\p_2$ from $\p_2^\lambda$ in the following way:
\begin{align}\label{30}
\p_2(x,y,t)=\int_\R\int_\R\int_\R e^{-2\pi i\lambda t}e^{-\pi i\lambda(2\xi-y)(x-u)}\p_2^\lambda(u,y)|\lambda|~dud\xi d\lambda,
\end{align}
where
\begin{align}\label{6}
\p_2^\lambda(x,y)&=\begin{cases}
\frac{e^{2\pi i\lambda}\sinc^2(\lambda)}{\pi^2\lambda^2xy}\ \big(1-\cos(\pi\lambda xy)\big), & (x,y)\in (0,2]\times(0,1];\\
\frac{e^{2\pi i\lambda}\sinc^2(\lambda)}{\pi^2\lambda^2xy}\ \big(\cos(\pi\lambda x(y-1))-\cos(\pi\lambda x)\big), & (x,y)\in (0,2]\times [1,2];\\
\frac{e^{2\pi i\lambda}\sinc^2(\lambda)}{\pi^2\lambda^2xy}\ \big(\cos(\pi\lambda (x-2)y)-\cos(2\pi\lambda y)\big), &(x,y)\in[2,4]\times(0,1];\\
\frac{e^{2\pi i\lambda}\sinc^2(\lambda)}{\pi^2\lambda^2xy}\ \big(\cos(\pi\lambda(x-2y))-\cos(\pi\lambda(x+2y-xy))\big), &(x,y)\in [2,4]\times[1,2];\\
0, & \text{otherwise}.
\end{cases}
\end{align}
We shall give an outline of the proof of \eqref{30}. The well-known inversion formula for the group \emph{Fourier} transform on the Heisenberg group $\H$ is given by
\begin{align*}
f(z,t)=\int_{\R}\tr(\pi_\lambda(z,t)^*\widehat{f}(\lambda))|\lambda|\ d\lambda,
\end{align*}
for $f\in \mathcal{S}(\H)$, the space of Schwartz functions on $\H$. But
\begin{align*}
\tr(\pi_\lambda(z,t)^*\widehat{f}(\lambda))&=e^{-2\pi i\lambda t}\tr(\pi_\lambda(z)^*W_\lambda(f^\lambda))\\
&=e^{-2\pi i\lambda t}\tr(W_\lambda((T_{(-x,-y)}^t)^\lambda f^\lambda)),
\end{align*}
where $\pi_\lambda(z) :=\pi_\lambda(z,0)$ and $z :=(x,y)$. Using the fact that $W_\lambda((T_{(-x,-y)}^t)^\lambda f^\lambda))=\pi_\lambda(-x,-y)W_\lambda(f^\lambda)$, we can write down the kernel of $W_\lambda((T_{(-x,-y)}^t)^\lambda f^\lambda))$ explicitly as $e^{-\pi i\lambda(2\xi-y)x}K_{f^\lambda}^\lambda(\xi-y,\eta)$. Thus,
\begin{align*}
\tr(W_\lambda((T_{(-x,-y)}^t)^\lambda f^\lambda))&=\int_\R e^{-\pi i\lambda(2\xi-y)x}K_{f^\lambda}^\lambda(\xi-y,\xi)\ d\xi\\
&=\int_\R e^{-\pi i\lambda(2\xi-y)x}\int_\R f^\lambda(u,y)e^{\pi i\lambda u(2\xi-y)}\ dud\xi\\
&=\int_{\R}\int_{\R}e^{-\pi i\lambda(2\xi-y)(x-u)}f^\lambda(u,y)\ dud\xi.
\end{align*}
Therefore,
\begin{align*}
f(z,t)=\int_{\R}\int_{\R}\int_{\R}e^{-2\pi i\lambda t}e^{-\pi i\lambda(2\xi-y)(x-u)}f^\lambda(u,y)|\lambda|\ dud\xi d\lambda,
\end{align*}
proving \eqref{30}.\\

First, we notice that $\{L_{(2k,l,m)}\p_1:k,l,m\in\z\}$ is an orthonormal system. In fact,
\begin{align*}
\langle L_{(2k,l,m)}\p_1,L_{(2k^\prime,l^\prime,m^\prime)}\p_1\rangle_{L^2(\H)}&=\langle L_{((-2k^\prime,-l^\prime,-m^\prime)(2k,l,m))}\p_1,\p_1\rangle_{L^2(\H)}\\
&=\frac{1}{\sqrt{2}}\int_Q\p_1(x+2(k^\prime-k),y+(l^\prime-l),\\
&t+m^\prime-m+(kl^\prime-lk^\prime)+\tfrac{1}{2}(x(l^\prime-l)-2y(k^\prime-k)))~dxdydt\\
&=\frac{1}{2}\int_Q\chi_{[0,2]}(x+2(k^\prime-k))\ \chi_{[0,1]}(y+(l^\prime-l))\times\\
&\chi_{[0,1]}(t+m^\prime-m+(kl^\prime-lk^\prime)+\tfrac{1}{2}(x(l^\prime-l)-2y(k^\prime-k)))~dxdydt\\
&=\int_{2(k^\prime-k)}^{2+2(k^\prime-k)}\int_{l^\prime-l}^{1+(l^\prime-l)}\int_0^1\chi_{[0,2]}(x)\chi_{[0,1]}(y)\times\\
&\chi_{[0,1]}(t+m^\prime-m+(kl^\prime-lk^\prime)+\tfrac{1}{2}(x(l^\prime-l)-2y(k^\prime-k)))~dtdydx,
\end{align*}
by applying a change of variables. Hence, if $(k,l)\neq(k^\prime,l^\prime)$, then $\langle L_{(2k,l,m)}\p_1,L_{(2k^\prime,l^\prime,m^\prime)}\p_1\rangle_{L^2(\H)}=0$. 

For $(k,l)=(k^\prime,l^\prime)$, we get
\begin{align*}
\langle L_{(2k,l,m)}\p_1,L_{(2k^\prime,l^\prime,m^\prime)}\p_1\rangle_{L^2(\H)}&=\frac{1}{2}\int_0^2\int_0^1\int_0^1\chi_{[0,1]}(t+m^\prime-m)~dtdydx =\delta_{m,m^\prime}.
\end{align*}
Next, we shall show that $\{L_{(2k,l,m)}\p_n:k,l,m\in\z\}$ possesses an upper {\it Riesz} bound.
\begin{proposition}
For the $n$-th order $B$-spline $\p_n$ on the Heisenberg group $\exists\ B>0$ such that
\begin{align*}
\bigg\|\sum_{k,l,m\in\z}c_{k,l,m}L_{(2k,l,m)}\p_n\bigg\|_\L^2\leq B\big\|\{c_{k,l,m}\}\big\|_{\ell^2(\z^3)}^2,\ \ \ \ \forall\ \{c_{k,l,m}\}\in\ell^2(\z^3). 
\end{align*}
\end{proposition}
\begin{proof}
As $\{L_{(2k,l,m)}\p_1:k,l,m\in\z\}$ is an orthonormal system, the result is true for $n=1$. Assume that the result is true for $n=p$. In other words, $\exists\ B_1>0$ such that
\begin{align}\label{29}
\bigg\|\sum_{k,l,m\in\z}c_{k,l,m}L_{(2k,l,m)}\p_p\bigg\|_\L^2\leq B_1\big\|\{c_{k,l,m}\}\big\|_{\ell^2(\z^3)}^2,\ \ \ \ \forall\ \{c_{k,l,m}\}\in\ell^2(\z^3).
\end{align}
We know that if $f_1\in\L$ and $f_2\in L^1(\H)$, then $f_1*f_2\in\L$ and $\|f_1*f_2\|_{\L}\leq \|f_1\|_\L\|f_2\|_{L^1(\H)}$. Now, using \eqref{28} and \eqref{29}, we get
\begin{align*}
\bigg\|\sum_{k,l,m\in\z}c_{k,l,m}L_{(2k,l,m)}\p_{p+1}\bigg\|_\L^2&=\bigg\|\sum_{k,l,m\in\z}c_{k,l,m}(L_{(2k,l,m)}\p_p)*\p_1\bigg\|_\L^2\\
&\leq \bigg\|\sum_{k,l,m\in\z}c_{k,l,m}L_{(2k,l,m)}\p_p\bigg\|_\L^2\|\p_1\|_{L^1(\H)}^2\\
&\leq B_1\big\|\{c_{k,l,m}\}\big\|_{\ell^2(\z^3)}^2\|\p_1\|_{L^1(\H)}^2. 
\end{align*}
As $\|\p_1\|_{L^1(\H)}=\sqrt{2}$, we obtain
\begin{align*}
\bigg\|\sum_{k,l,m\in\z}c_{k,l,m}L_{(2k,l,m)}\p_{p+1}\bigg\|_\L^2\leq 2B_1\|\{c_{k,l,m}\}\big\|_{\ell^2(\z^3)}^2.
\end{align*}
Thus the result follows for $n=p+1$ with $B=2B_1>0$, proving the claim.
\end{proof}

\section{Some integral properties of $B$-splines on $\H$}

For $(u,v,s)\in\H$, we define the right translation operator on $\H$ by $R_{(u,v,s)}:\L\rightarrow\L$, $(R_{(u,v,s)}f)(x,y,t):=f((x,y,t)(u,v,s))$, $\forall\ (x,y,t)\in\H$, which is a unitary operator on $\L$.\\

For the rest of this paper, we denote by $Q:=[0,2]\times[0,1]\times[0,1]$ the fundamental domain of the lattice $\Gamma=\{(2k,l,m):k,l,m\in\z\}$ in $\H$.\\

Now, we prove some integral properties of $\p_n$. Although the proofs are based on induction, the non-standard nature of the third variable in the group structure makes the proofs more delicate than in the classical case. Hence, we provide the proofs.\\

First, we show that the integration of an $L^1$-function against $\p_n$ is the $n$-th iterated integral of the right translates of the function over the fundamental domain.

\begin{theorem}
For $f\in L^1(\H)$,
\begin{align*}
\int_{\R^3}f(x,y,t)\ \p_n(x,y,t)~dxdydt=\frac{1}{(\sqrt{2})^n}\int_{Q^n}&(R_{(x_2,y_2,t_2)}\cdots R_{(x_n,y_n,t_n)}f)(x_1,y_1,t_1)\cdot\\
&(dx_1\cdots dx_n)(dy_1\cdots dy_n)(dt_1\cdots dt_n),\ \ \ \ \ n\geq 2.
\end{align*}
\end{theorem}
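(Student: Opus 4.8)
The plan is to argue by induction on $n$, using the recursion $\phi_{n+1}=\phi_n\ast\phi_1$ (equivalently \eqref{5}) to reduce the statement for $n+1$ to the one for $n$. The conceptual heart is that composing right translations is right translation by the group product, $R_aR_b=R_{ab}$, so that $\left(R_{(x_2,y_2,t_2)}\cdots R_{(x_n,y_n,t_n)}f\right)(x_1,y_1,t_1)=f\big((x_1,y_1,t_1)(x_2,y_2,t_2)\cdots(x_n,y_n,t_n)\big)$. Under this reading the asserted formula is precisely the \emph{Heisenberg} analogue of the classical identity $\int_\R f\,B_n=\int_{[0,1]^n}f(x_1+\cdots+x_n)$, with the abelian sum replaced by the group law of $\H$.

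First I would dispose of the base case $n=1$, which is immediate from $\phi_1=\tfrac1{\sqrt2}\chi_Q$: the right-hand side collapses to $\tfrac1{\sqrt2}\int_Q f$ with no translation operators present. The main tool for the inductive step is the duality identity
\[
\int_\H f(w)\,(g\ast h)(w)\,dw=\int_\H h(v)\left[\int_\H (R_vf)(z)\,g(z)\,dz\right]dv,\qquad f,g,h\in L^1(\H),
\]
which I would obtain by expanding $g\ast h$ via \eqref{hconv}, applying Fubini, and in the inner integral substituting $z=wv^{-1}$, i.e.\ $w=zv$. Since $\H$ is unimodular (\emph{Lebesgue} measure is simultaneously a left and a right \emph{Haar} measure), this substitution preserves the measure and turns $f(w)$ into $f(zv)=(R_vf)(z)$ while $g(wv^{-1})$ becomes $g(z)$. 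Unimodularity also yields $\|R_vf\|_{L^1(\H)}=\|f\|_{L^1(\H)}$, so $R_vf\in L^1(\H)$ and the induction hypothesis may legitimately be applied to it.

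For the inductive step I would take $g=\phi_n$, $h=\phi_1$, and write $v=(x_{n+1},y_{n+1},t_{n+1})$. The duality identity converts $\int_\H f\,\phi_{n+1}$ into $\tfrac1{\sqrt2}\int_Q\left[\int_\H (R_vf)(z)\,\phi_n(z)\,dz\right]dv$. Applying the induction hypothesis to the $L^1$ function $R_vf$ replaces the inner integral by $\tfrac1{(\sqrt2)^n}\int_{Q^n}\left(R_{(x_2,y_2,t_2)}\cdots R_{(x_n,y_n,t_n)}R_vf\right)(x_1,y_1,t_1)$ against \emph{Lebesgue} measure, and $R_{(x_2,y_2,t_2)}\cdots R_{(x_n,y_n,t_n)}R_v=R_{(x_2,y_2,t_2)}\cdots R_{(x_{n+1},y_{n+1},t_{n+1})}$. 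Collecting the constant $\tfrac1{\sqrt2}\cdot\tfrac1{(\sqrt2)^n}=\tfrac1{(\sqrt2)^{n+1}}$ and merging the integration over $v\in Q$ with the integration over $Q^n$ by Fubini gives exactly the $(n+1)$-fold formula.

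I expect the genuine obstacle to be the measure-preserving change of variables in the duality identity: this is where the non-standard third coordinate of the group law intervenes, so the elementary translation-invariance argument that suffices in the classical case must be replaced by an explicit appeal to the unimodularity of $\H$. The remaining difficulty is purely organizational, namely keeping the nesting order of the right translations correct so that the freshly peeled-off $\phi_1$-variable always becomes the rightmost operator $R_{(x_{n+1},y_{n+1},t_{n+1})}$, and checking the Fubini interchanges; both are guaranteed by $f\in L^1(\H)$ together with the boundedness and compact support of the $B$-splines $\phi_n$.
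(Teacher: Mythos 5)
Your proposal is correct and follows essentially the same route as the paper: induction on $n$ via the recursion $\phi_{n+1}=\phi_n\ast\phi_1$, Fubini, and the measure-preserving substitution $w=zv$ that converts $\int_\H f(w)\,\phi_n(wv^{-1})\,dw$ into $\int_\H (R_vf)(z)\,\phi_n(z)\,dz$. The paper merely performs that substitution explicitly in coordinates (where the Jacobian is $1$ because the group law is unipotent in the third variable) rather than isolating your abstract duality identity and citing unimodularity, so the two arguments differ only in packaging.
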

\begin{proof}
We have $\int\limits_{\R^3}f(x,y,t)\p_1(x,y,t)~dxdydt=\frac{1}{\sqrt{2}}\int\limits_Qf(x,y,t)~dxdydt$. Using \eqref{5} for $n=1$ and then applying Fubini's theorem, we get
\begin{align*}
\int_{\R^3}f(x,y,t)\p_2(x,y,t)~dxdydt=\frac{1}{\sqrt{2}}\int_Q\bigg(\int_{\R^3}f(x,y,t)\p_1(x-u,&y-v,t-s+\tfrac{1}{2}(vx-uy))\cdot\\
&dxdydt\bigg)dudvds.
\end{align*}
Applying a change of variables, we obtain
\begin{align*}
\int_{\R^3}f(x,y,t)\p_2(x,y,t)~dxdydt=\frac{1}{\sqrt{2}}\int_Q\bigg(\int_{\R^3}f(x+u,&y+v,t+s-\tfrac{1}{2}(vx-uy))\p_1(x,y,t)\cdot\\
&dxdydt\bigg)dudvds\\
=\frac{1}{(\sqrt{2})^2}\int_{Q^2}f(x+u,y+v,t+s-\tfrac{1}{2}&(vx-uy))~dxdudydvdtds,
\end{align*}
which can be rewritten as follows:
\begin{align*}
\int_{\R^3}f(x,y,t)\p_2(x,y,t)~dxdydt=\frac{1}{(\sqrt{2})^2}\int_{Q^2}f(x_1+x_2,&y_1+y_2,t_1+t_2-\tfrac{1}{2}(y_2x_1-x_2y_1))\cdot\\
&dx_1dx_2dy_1dy_2dt_1dt_2\\
=\frac{1}{(\sqrt{2})^2}\int_{Q^2}(R_{(x_2,y_2,t_2)}f)(x_1,y_1,t_1)&~dx_1dx_2dy_1dy_2dt_1dt_2.
\end{align*}
Thus, the result is true for $n=2$. Now using the induction step for $m$, we get
\begin{align*}
\int_{\R^3}f(x,y,t)\ \p_{m+1}(x,y,t)~dxdydt&=\frac{1}{\sqrt{2}}\int_Q\bigg(\int_{\R^3}f(x,y,t)\p_m(x-x_{m+1},y-y_{m+1},\\
&t-t_{m+1}+\tfrac{1}{2}(y_{m+1}x-x_{m+1}y))~dxdydt\bigg)dx_{m+1}dy_{m+1}dt_{m+1}\\
=\frac{1}{(\sqrt{2})^{m+1}}&\int_{Q^{m+1}}(R_{(x_2,y_2,t_2)}\cdots R_{(x_{m+1},y_{m+1},t_{m+1})}f)(x_1,y_1,t_1)\cdot\\
&(dx_1\cdots dx_{m+1})(dy_1\cdots dy_{m+1})(dt_1\cdots dt_{m+1}),
\end{align*}
proving our assertion.
\end{proof}
Now we compute explicitly the integral of $\p_n$ and show that the resulting value depends on $n$.
\begin{theorem}
For $n\geq 1$,
\begin{align*}
\int_{\R^3}\p_n(x,y,t)~dxdydt=(\sqrt{2})^n.
\end{align*}
\end{theorem}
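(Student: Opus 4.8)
The plan is to prove the identity by induction on $n$, using the recursion \eqref{5} together with the fact that the affine change of variables induced by the \emph{Heisenberg} group law preserves Lebesgue measure. For the base case $n=1$, since $\p_1=\frac{1}{\sqrt{2}}\chi_{[0,2]\times[0,1]\times[0,1]}$, a direct computation gives $\int_{\R^3}\p_1\,dx\,dy\,dt=\frac{1}{\sqrt{2}}\cdot\mathrm{vol}(Q)=\frac{1}{\sqrt{2}}\cdot 2=\sqrt{2}$, as required.

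For the inductive step I would assume $\int_{\R^3}\p_n=(\sqrt{2})^n$, integrate both sides of \eqref{5} over $(x,y,t)\in\R^3$, and apply Fubini's theorem to move the integration in $(x,y,t)$ inside the integral over $(u,v,s)\in Q$, obtaining
\begin{align*}
\int_{\R^3}\p_{n+1}(x,y,t)\,dx\,dy\,dt=\frac{1}{\sqrt{2}}\int_Q\left(\int_{\R^3}\p_n\big(x-u,y-v,t-s+\tfrac{1}{2}(vx-uy)\big)\,dx\,dy\,dt\right)du\,dv\,ds.
\end{align*}
The key step, reflecting the non-standard nature of the third variable, is to show that for each fixed $(u,v,s)\in Q$ the inner integral equals $\int_{\R^3}\p_n$. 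To this end I would perform the substitution $x'=x-u$, $y'=y-v$, $t'=t-s+\tfrac{1}{2}(vx-uy)$. Although $t'$ depends on both $x$ and $y$, the Jacobian matrix of this map is lower triangular with entries $1$ on the diagonal (the off-diagonal terms $\tfrac{v}{2}$ and $-\tfrac{u}{2}$ sit below it), so its determinant is $1$; hence the map is measure-preserving and the inner integral equals $\int_{\R^3}\p_n(x',y',t')\,dx'\,dy'\,dt'=(\sqrt{2})^n$, independently of $(u,v,s)$. Substituting back and using $\mathrm{vol}(Q)=2$ then gives
\begin{align*}
\int_{\R^3}\p_{n+1}=\frac{1}{\sqrt{2}}\cdot 2\cdot(\sqrt{2})^n=(\sqrt{2})^{n+1},
\end{align*}
which closes the induction.

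The main obstacle is precisely the analysis of this inner integral: one must verify carefully that the shear coming from the group multiplication contributes a trivial Jacobian, so that translating the third coordinate by a group element does not distort the measure. Once this is settled, the remaining manipulations are routine. Alternatively, the result follows more abstractly from the general identity $\int_\H(f*g)=\big(\int_\H f\big)\big(\int_\H g\big)$, which holds because $\H$ is unimodular with \emph{Haar} measure equal to Lebesgue measure, applied to $\p_{n}=\p_{n-1}*\p_1$; the inductive argument via \eqref{5}, however, keeps this measure-theoretic subtlety explicit and matches the style of the preceding theorem.
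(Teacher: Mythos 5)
Your proposal is correct and follows essentially the same route as the paper: induction via the recursion \eqref{5}, Fubini, and the observation that the inner integral is invariant under the shear $(x,y,t)\mapsto(x-u,y-v,t-s+\tfrac{1}{2}(vx-uy))$, which the paper dispatches with a change of variables and you justify explicitly through the unit-determinant triangular Jacobian. Your closing remark that the identity also follows from $\int_\H (f*g)=\big(\int_\H f\big)\big(\int_\H g\big)$ on the unimodular group $\H$ is a valid shortcut the paper does not take, but the core argument is the same.
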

\begin{proof}
We have $\int\limits_{\R^3}\p_1(x,y,t)~dxdydt=\sqrt{2}$ and therefore the result is true for $n=1$. Observing that
\begin{align*}
\int_{\R^3}\p_{m+1}(x,y,t)~dxdydt&=\frac{1}{\sqrt{2}}\int_Q\bigg(\int_{\R^3}\p_m(x-u,y-v,t-s+\tfrac{1}{2}(vx-uy))~dxdydt\bigg)dudvds\\
&=\frac{1}{\sqrt{2}}\int_Q\bigg(\int_{\R^3}\p_m(x,y,t)~dxdydt\bigg)dudvds
\end{align*}
by \eqref{5}, and using the induction step for $m$, we get the required result.
\end{proof}
\begin{remark}
Let $f:\H\rightarrow\c$. Then, for $x,y,t\in\R$ and $k,l,m\in\z$, we can write
\begin{align*}
(L_{(2k,l,m)}f)(x,y,t)&=f(x-2k,y-l,t-m+\tfrac{1}{2}(-lx+2ky))\\
&=f((-2k,-l,0)(x,y,t-m))\\
&=(L_{(2k,l,0)}f)(x,y,t-m).\numberthis \label{10} 
\end{align*}
\end{remark}
Now we prove a partition-of-unity like property for $\p_n$.
\begin{proposition}
For $n\geq 1$ and $x,y\in\R$,
\begin{align*}
\int_0^1\sum_{k,l,m\in\z}L_{(2k,l,m)}\p_n(x,y,t)~dt=(\sqrt{2})^{n-2}.
\end{align*}
\end{proposition}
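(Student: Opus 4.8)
The plan is to integrate out the central variable $t$, thereby reducing the claim to a classical partition-of-unity identity on $\R^2$. Since $\p_n\geq 0$, Tonelli's theorem lets me exchange the $t$-integral with the triple sum, so it suffices to analyze $\sum_{k,l,m\in\z}\int_0^1 L_{(2k,l,m)}\p_n(x,y,t)\,dt$. By the Remark \eqref{10}, $L_{(2k,l,m)}\p_n(x,y,t)=(L_{(2k,l,0)}\p_n)(x,y,t-m)$, so the $m$-sum together with the integral over $[0,1]$ unfolds into an integral over all of $\R$:
\[
\sum_{m\in\z}\int_0^1 (L_{(2k,l,0)}\p_n)(x,y,t-m)\,dt=\int_{\R}(L_{(2k,l,0)}\p_n)(x,y,t)\,dt.
\]
Writing $L_{(2k,l,0)}\p_n(x,y,t)=\p_n(x-2k,\,y-l,\,t+\tfrac12(2ky-lx))$ and observing that the additive constant $\tfrac12(2ky-lx)$ in the third slot is irrelevant after integrating over all $t$, this equals $\Phi_n(x-2k,y-l)$, where $\Phi_n(x,y):=\int_{\R}\p_n(x,y,t)\,dt$. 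Hence the left-hand side collapses to the $\R^2$-periodization $S_n(x,y):=\sum_{k,l\in\z}\Phi_n(x-2k,y-l)$, and the goal becomes to show $S_n\equiv(\sqrt2)^{n-2}$.

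I would then establish $S_n\equiv(\sqrt2)^{n-2}$ by induction on $n$. For the base case $n=1$, a direct computation gives $\Phi_1(x,y)=\tfrac{1}{\sqrt2}\chi_{[0,2]\times[0,1]}(x,y)$; since the translates $\{\chi_{[0,2]}(\cdot-2k)\}_{k\in\z}$ tile $\R$ and $\{\chi_{[0,1]}(\cdot-l)\}_{l\in\z}$ form a partition of unity (property (iv) of the classical $B$-splines for $n=1$), one gets $S_1\equiv\tfrac{1}{\sqrt2}=(\sqrt2)^{-1}$, as required.

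For the inductive step I would first record how $\Phi_n$ transforms under the recursion \eqref{5}. Integrating \eqref{5} over $t\in\R$, the inner $t$-integral again absorbs the constant $t$-shift $-s+\tfrac12(vx-uy)$, and the $s$-integral contributes a factor $\int_0^1 ds=1$, leaving the purely Euclidean recursion
\[
\Phi_{n+1}(x,y)=\frac{1}{\sqrt2}\int_0^2\!\int_0^1\Phi_n(x-u,y-v)\,dv\,du.
\]
Periodizing and interchanging the (locally finite, since $\Phi_n$ has compact support) sum with the integral yields $S_{n+1}(x,y)=\tfrac{1}{\sqrt2}\int_0^2\int_0^1 S_n(x-u,y-v)\,dv\,du$. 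Feeding in the induction hypothesis $S_n\equiv(\sqrt2)^{n-2}$ gives $S_{n+1}=\tfrac{1}{\sqrt2}\cdot(\sqrt2)^{n-2}\cdot 2=(\sqrt2)^{n-1}=(\sqrt2)^{(n+1)-2}$, closing the induction.

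The main point requiring care — and the only place where the noncommutative structure of $\H$ enters — is the treatment of the twist terms $\tfrac12(2ky-lx)$ and $\tfrac12(vx-uy)$ appearing in the third coordinate. These are exactly what distinguish the $\H$-computation from the classical one, but since they are additive constants in the $t$-variable, they are annihilated by integration against Lebesgue measure over all of $\R$. This is precisely the mechanism by which the Heisenberg periodization degenerates to the flat $\R^2$ partition of unity; verifying rigorously that each such shift may be discarded, and that Tonelli applies throughout (guaranteed by $\p_n\geq 0$ together with its compact support), is the step I would be most careful to justify.
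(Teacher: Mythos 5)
Your proposal is correct and rests on exactly the same pillars as the paper's proof: the identity \eqref{10} to unfold the $m$-sum together with the $\int_0^1 dt$ into an integral over all of $\R$, translation invariance of Lebesgue measure in $t$ to annihilate the twist terms $\tfrac{1}{2}(2ky-lx)$ and $-s+\tfrac{1}{2}(vx-uy)$, and the recursion \eqref{5} driving an induction whose step yields the factor $\tfrac{1}{\sqrt{2}}\cdot 2=\sqrt{2}$ from integrating over $[0,2]\times[0,1]$. The only difference is organizational: you integrate out $t$ once and for all and run the induction on the Euclidean periodization $S_n$ of the marginal $\Phi_n$, whereas the paper inducts directly on the full Heisenberg quantity and must refold it inside the inductive step via \eqref{12} and a second application of \eqref{10} --- the underlying computations coincide, and your Tonelli justifications (valid since $\p_n\geq 0$) cover the interchanges the paper performs implicitly.
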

\begin{proof}
Using \eqref{10}, we get
\begin{align*}
\int_0^1\sum_{k,l,m\in\z}L_{(2k,l,m)}\p_1(x,y,t)~dt&=\sum_{k,l,m\in\z}\int_0^1L_{(2k,l,0)}\p_1(x,y,t-m)~dt\\
&=\sum_{k,l,m\in\z}\int_{-m}^{1-m}L_{(2k,l,0)}\p_1(x,y,t)~dt,
\end{align*}
by applying a change of variables. Hence,
\begin{align*}
\int_0^1\sum_{k,l,m\in\z}L_{(2k,l,m)}\p_1(x,y,t)~dt&=\sum_{k,l\in\z}\int_\R L_{(2k,l,0)}\p_1(x,y,t)~dt\\
&=\sum_{k,l\in\z}\int_\R \p_1(x-2k,y-l,t+\tfrac{1}{2}(2ky-lx))~dt\\
&=\sum_{k,l\in\z}\int_\R \p_1(x-2k,y-l,t)~dt,
\end{align*}
again by applying a change of variables. Now, using the definition of $\p_1$, we get
\begin{align*}
\int_0^1\sum_{k,l,m\in\z}L_{(2k,l,m)}\p_1(x,y,t)~dt&=\frac{1}{\sqrt{2}}\bigg(\sum_{k\in\z}\chi_{[0,2]}(x-2k)\bigg)\bigg(\sum_{l\in\z}\chi_{[0,1]}(y-l)\bigg)\int_\R \chi_{[0,1]}(t)~dt\\
&=\frac{1}{\sqrt{2}}\bigg(\sum_{k\in\z}\chi_{[2k,2k+2]}(x)\bigg)\bigg(\sum_{l\in\z}\chi_{[l,l+1]}(y)\bigg)\\
&=\frac{1}{\sqrt{2}},\ \ \ \ \forall\ x,y\in\R.
\end{align*}
Therefore, the result is true for $n=1$. Let $p\geq 1$. Then,
\begin{align*}
\int_0^1\sum_{k,l,m\in\z}L_{(2k,l,m)}\p_{p+1}(x,y,t)~dt&=\sum_{k,l\in\z}\int_\R \p_{p+1}(x-2k,y-l,t)~dt\\
&=\frac{1}{\sqrt{2}}\sum_{k,l\in\z}\int_\R\bigg(\int_Q\p_p(x-2k-u,y-l-v,\\
&t-s+\tfrac{1}{2}(v(x-2k)-u(y-l)))~dudvds\bigg)dt,
\end{align*}
by making use of \eqref{5}. Thus,
\begin{align}\label{11}
\int_0^1\sum_{k,l,m\in\z}L_{(2k,l,m)}\p_{p+1}(x,y,t)~dt&=\frac{1}{\sqrt{2}}\sum_{k,l\in\z}\int_Q\bigg(\int_\R\p_p(x-2k-u,y-l-v,t)~dt\bigg)dudvds.
\end{align}
However,
\begin{align*}
\int_{\R}L_{(2k,l,0)}\p_p(x-u,y-v,t)~dt&=\int_\R\p_p(x-u-2k,y-v-l,t+\tfrac{1}{2}(-l(x-u)+2k(y-v)))~dt\\
&=\int_\R\p_p(x-2k-u,y-l-v,t)~dt.\numberthis \label{12}
\end{align*}
Using \eqref{12} in \eqref{11}, we obtain
\begin{align*}
\int_0^1\sum_{k,l,m\in\z}L_{(2k,l,m)}\p_{p+1}(x,y,t)~dt&=\frac{1}{\sqrt{2}}\sum_{k,l\in\z}\int_Q\bigg(\int_\R L_{(2k,l,0)}\p_p(x-u,y-v,t)~dt\bigg)dudvds\\
&=\frac{1}{\sqrt{2}}\sum_{k,l\in\z}\int_Q\bigg(\sum_{m\in\z}\int_{-m}^{1-m}L_{(2k,l,0)}\p_p(x-u,y-v,t)~dt\bigg)dudvds\\
&=\frac{1}{\sqrt{2}}\int_Q\bigg(\int_0^1\sum_{k,l,m\in\z}L_{(2k,l,m)}\p_p(x-u,y-v,t)~dt\bigg)dudvds,
\end{align*}
by using \eqref{10} again. The result now follows by the induction step $p$.
\end{proof}
\section{Support and \emph{Fourier} Transform of B-Splines on $\H$}
We know that the polynomial B-spline $B_n$ has compact support for each $n$. For the B-splines $\p_n$ on $\H$, we find a box that includes the support of $\p_n$.
\begin{theorem}
We have $\supp\p_1=[0,2]\times[0,1]\times[0,1]$ and 
\[
\supp\p_n\subset[0,2n]\times[0,n]\times[-\tfrac{1}{2}(n+2)(n-1),\tfrac{1}{2}(n+1)(n+2)-2], 
\]
for $n\geq 2$.
\end{theorem}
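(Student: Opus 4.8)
The plan is to argue by induction on $n$, using the recursion \eqref{5} to pass from $\p_n$ to $\p_{n+1}$. The base case $n=1$ is immediate from the definition $\p_1=\frac{1}{\sqrt{2}}\x_{[0,2]\times[0,1]\times[0,1]}$, which incidentally also agrees with the claimed formula at $n=1$ (there $-\tfrac12(n+2)(n-1)=0$ and $\tfrac12(n+1)(n+2)-2=1$). For the inductive step I would abbreviate the claimed $t$-endpoints by $a_n:=-\tfrac12(n+2)(n-1)$ and $b_n:=\tfrac12(n+1)(n+2)-2$ and assume $\supp\p_n\subseteq[0,2n]\times[0,n]\times[a_n,b_n]$.

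Next, fix $(x,y,t)$ with $\p_{n+1}(x,y,t)\neq 0$. By \eqref{5} the integrand cannot vanish identically, so there exists $(u,v,s)\in Q=[0,2]\times[0,1]\times[0,1]$ with $(x-u,\,y-v,\,t-s+\tfrac12(vx-uy))\in\supp\p_n$. Feeding this into the inductive hypothesis produces three constraints. The first two are routine: $x-u\in[0,2n]$ with $u\in[0,2]$ forces $x\in[0,2n+2]=[0,2(n+1)]$, and $y-v\in[0,n]$ with $v\in[0,1]$ forces $y\in[0,n+1]$, which are exactly the desired first two coordinate ranges.

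The third coordinate is where the real work lies, because the group law injects the bilinear twist $\tfrac12(vx-uy)$, coupling the very variables that the previous step has just bounded. From $a_n\le t-s+\tfrac12(vx-uy)\le b_n$ I would solve for $t$ and bound the twist term crudely, using only the box ranges already obtained: since $u,v,x,y\ge 0$, $u\le 2$, $v\le 1$, $x\le 2(n+1)$ and $y\le n+1$, one gets $\tfrac12(uy-vx)\le\tfrac12 uy\le n+1$ and $\tfrac12(uy-vx)\ge-\tfrac12 vx\ge-(n+1)$. Writing $t=(\text{value in }[a_n,b_n])+s+\tfrac12(uy-vx)$ and combining with $s\in[0,1]$ yields $t\le b_n+(n+2)$ and $t\ge a_n-(n+1)$, i.e. the endpoint recursions $b_{n+1}=b_n+(n+2)$ and $a_{n+1}=a_n-(n+1)$.

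Finally I would close the induction by checking that the stated closed forms satisfy these recursions with the correct seed $a_1=0$, $b_1=1$: indeed $b_{n+1}-b_n=\tfrac12[(n+2)(n+3)-(n+1)(n+2)]=n+2$ and $a_{n+1}-a_n=-\tfrac12[(n+3)n-(n+2)(n-1)]=-(n+1)$. I expect the only genuinely delicate point to be the handling of the twist term: the temptation is to exploit the coupling between $(u,v)$ and $(x,y)$ to get a sharper estimate, but the independent box bounds above already reproduce the stated endpoints and keep the argument clean. Consequently the asserted inclusion is not sharp (the true $t$-support is in fact strictly smaller), but it is precisely what the theorem claims.
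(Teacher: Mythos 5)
Your proposal is correct and follows essentially the same route as the paper: induction on $n$ via the convolution recursion \eqref{5}, with the first two coordinates handled routinely and the twist term $\tfrac{1}{2}(uy-vx)$ bounded using the already-established ranges of $x$ and $y$, leading to the same endpoint recursions $a_{n+1}=a_n-(n+1)$, $b_{n+1}=b_n+(n+2)$. The only cosmetic difference is that the paper first changes variables (its Eq.~\eqref{7}) and obtains the intermediate coupled bounds $t>a_m-\tfrac{x}{2}$ and $t<b_m+1+y$ before inserting $x\le 2(m+1)$, $y\le m+1$, whereas you bound the twist directly with the independent box estimates $u\le 2$, $v\le 1$; both yield identical conclusions.
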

\begin{proof}
The support of $\p_1$ can be immediatley deduced from its definition. Applying a change of variables in \eqref{5}, we get
\begin{align}\label{7}
\p_{n+1}(x,y,t)=\frac{1}{\sqrt{2}}\int_{x-2}^x\int_{y-1}^y\int_{t+\frac{1}{2}(uy-vx)-1}^{t+\frac{1}{2}(uy-vx)}\p_n(u,v,s)~dsdvdu.
\end{align}
First, we shall show that $\supp\p_n$ in the first two variables is $[0,2n]\times[0,n]$. The result is true for $n=1$. Assume that the result is true for $n=m\in\N$. Then,
\begin{align*}
\p_{m+1}(x,y,t)=\frac{1}{\sqrt{2}}\int_{[x-2,x]\cap[0,2m]}\int_{[y-1,y]\cap[0,m]}\int_{t+\frac{1}{2}(uy-vx)-1}^{t+\frac{1}{2}(uy-vx)}\p_m(u,v,s)~dsdvdu.
\end{align*}
By looking at the range of first two integrals, we see that $0<x<2(m+1)$ and $0<y<m+1$, proving that $\supp\p_{m+1}$ for the first two variables is $[0,2(m+1)]\times[0,m+1]$. Let $K_n$ be the support of $\p_n$ for the last variable and for $n\geq 1$. We aim to show that $K_n\subset[-\frac{1}{2}(n+2)(n-1),\frac{1}{2}(n+1)(n+2)-2]$, for all $n\in\N$. We will prove this again by induction on $n$. The result is trivially true when $n=1$. Let us assume that the result is true for $n=m$. Then, from \eqref{7}, we obtain
\begin{align*}
\p_{m+1}(x,y,t)=\frac{1}{\sqrt{2}}\int_{x-2}^x\int_{y-1}^y\int_{[t+\frac{1}{2}(uy-vx)-1,t+\frac{1}{2}(uy-vx)]\cap[-\frac{1}{2}(m+2)(m-1),\frac{1}{2}(m+1)(m+2)-2]}\hspace{-1 cm}\p_m(u,v,s)~dsdvdu.
\end{align*}
From the range of the last integral, one can show that $-\frac{1}{2}(m+2)(m-1)-\frac{x}{2}<t<\frac{1}{2}(m+1)(m+2)-1+y$, which in turn implies $-\frac{1}{2}(m+2)(m-1)-(m+1)<t<\frac{1}{2}(m+1)(m+2)+m$. Thus, $-\frac{1}{2}(m+1+2)(m+1-1)<t<\frac{1}{2}(m+2)(m+3)-2$. Therefore, $K_{m+1}\subset[-\frac{1}{2}(m+1+2)(m+1-1),\frac{1}{2}(m+1+1)(m+1+2)-2]$, proving the required result.  
\end{proof}
Unlike the classical case, the group Fourier transform on $\H$ is an operator valued function. So in order to find the operator value of the Fourier transform $\widehat{\p_n}(\lambda)$ of $\p_n$, we use the fact that it is an integral operator. Hence in the next result, we compute the corresponding kernel.

\begin{proposition}
Fix $\lambda\in\R^\times$. Then, the kernel of $\widehat{\p}_1(\lambda)$ is given by
\begin{align*}
K_{\p_1^\lambda}^\lambda(\xi,\eta)=\sqrt{2}e^{\pi i\lambda}\sinc(\lambda)e^{\pi i\lambda(\xi+\eta)}\sinc(\lambda(\xi+\eta))\chi_{[0,1]}(\eta-\xi).
\end{align*}
Here, we defined
\[
\sinc z := \frac{\sin\pi z}{\pi z}, \quad z\in \R.
\]
For $n\geq 2$, the kernel of $\widehat{\p}_n(\lambda)$ can be evaluated recursively as follows:
\begin{align*}
K_{\p_n^\lambda}^\lambda(\xi,\eta)=\sqrt{2}e^{\pi i\lambda}\sinc(\lambda)e^{2\pi i\lambda\eta}\int_0^1e^{-\pi i\lambda y_{n-1}}\sinc(\lambda(2\eta-y_{n-1}))K_{\p_{n-1}^\lambda}^\lambda(\xi,\eta-y_{n-1})~dy_{n-1}.
\end{align*} 
\end{proposition}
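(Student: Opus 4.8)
The plan is to treat the base case $n=1$ by direct computation and to derive the recursion from the fundamental homomorphism property of the group \emph{Fourier} transform, namely $\widehat{f*g}(\lambda)=\widehat{f}(\lambda)\,\widehat{g}(\lambda)$.

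For the base case I would first compute the partial inverse \emph{Fourier} transform $\p_1^\lambda$. Since $\p_1=\frac{1}{\sqrt{2}}\chi_{[0,2]\times[0,1]\times[0,1]}$, integrating out the $t$-variable gives $\p_1^\lambda(x,y)=\frac{1}{\sqrt{2}}\chi_{[0,2]}(x)\chi_{[0,1]}(y)\int_0^1 e^{2\pi i\lambda t}\,dt$, and the elementary identity $\int_0^1 e^{2\pi i\lambda t}\,dt=e^{\pi i\lambda}\sinc(\lambda)$ produces the prefactor $e^{\pi i\lambda}\sinc(\lambda)$. Feeding this into the kernel formula $K_{\p_1^\lambda}^\lambda(\xi,\eta)=\int_\R \p_1^\lambda(x,\eta-\xi)e^{\pi i\lambda x(\xi+\eta)}\,dx$, the factor $\chi_{[0,1]}(\eta-\xi)$ comes out, and what remains is $\int_0^2 e^{\pi i\lambda x(\xi+\eta)}\,dx=2e^{\pi i\lambda(\xi+\eta)}\sinc(\lambda(\xi+\eta))$; collecting the constants yields the stated closed form.

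For the recursion I would use $\p_n=\p_{n-1}*\p_1$, which is precisely the recursion \eqref{5} (as one verifies by writing out the group convolution with $(u,v,s)^{-1}=(-u,-v,-s)$), together with $\widehat{f*g}(\lambda)=\widehat{f}(\lambda)\widehat{g}(\lambda)$, to conclude $\widehat{\p}_n(\lambda)=\widehat{\p}_{n-1}(\lambda)\,\widehat{\p}_1(\lambda)$. As all the operators involved are \emph{Hilbert--Schmidt}, the kernel of the composition is the composition of the kernels,
\begin{align*}
K_{\p_n^\lambda}^\lambda(\xi,\eta)=\int_\R K_{\p_{n-1}^\lambda}^\lambda(\xi,\zeta)\,K_{\p_1^\lambda}^\lambda(\zeta,\eta)~d\zeta.
\end{align*}
Substituting the base-case formula for $K_{\p_1^\lambda}^\lambda(\zeta,\eta)$, the factor $\chi_{[0,1]}(\eta-\zeta)$ confines $\zeta$ to $[\eta-1,\eta]$, and the change of variables $y_{n-1}=\eta-\zeta$ (so that $\zeta+\eta=2\eta-y_{n-1}$ and $e^{\pi i\lambda(2\eta-y_{n-1})}=e^{2\pi i\lambda\eta}e^{-\pi i\lambda y_{n-1}}$) converts the $\zeta$-integral into the claimed integral over $[0,1]$.

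The individual computations are routine; the single point demanding care is the identity $\widehat{\p}_n(\lambda)=\widehat{\p}_{n-1}(\lambda)\widehat{\p}_1(\lambda)$. Because group convolution on $\H$ is non-commutative, the order of the factors matters, so one must confirm that unwinding the $n$-fold convolution through \eqref{5} exhibits $\p_1$ as the right-hand factor, ensuring that the product of kernels is formed in the correct order. With that settled, the change of variables is the only remaining step and is straightforward.
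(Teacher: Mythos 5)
Your proposal is correct and follows essentially the same route as the paper: the identical direct computation of $\p_1^\lambda$ and of the base-case kernel, then the multiplicativity $\widehat{\p}_n(\lambda)=\widehat{\p}_{n-1}(\lambda)\,\widehat{\p}_1(\lambda)$, kernel composition, and the same change of variables $y_{n-1}=\eta-\zeta$. The only difference is presentational: the paper expands $\widehat{\p}_n(\lambda)=\big(\widehat{\p}_1(\lambda)\big)^n$ into an $(n-1)$-fold iterated kernel integral and re-collects the inner block via Fubini, whereas your two-factor composition reaches the recursion more economically (and your caution about the order of factors, while good hygiene, is moot here since all factors are powers of the single operator $\widehat{\p}_1(\lambda)$ and hence commute).
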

\begin{proof}
We know that $\widehat{\p}_1(\lambda)$ is an integral operator with kernel $K_{\p_1^\lambda}^\lambda$, given by
\begin{align}\label{8}
K_{\p_1^\lambda}^\lambda(\xi,\eta)=\int_\R\p_1^\lambda(x,\eta-\xi)e^{\pi i\lambda x(\xi+\eta)}~dx.
\end{align}
However,
\begin{align*}
\p_1^\lambda(x,y)&=\int_\R\p_1(x,y,t)e^{2\pi i\lambda t}~dt\\
&=\frac{1}{\sqrt{2}}\chi_{[0,2]}(x)\chi_{[0,1]}(y)\int_\R\chi_{[0,1]}(t)e^{2\pi i\lambda t}~dt\\
&=\frac{1}{\sqrt{2}}\chi_{[0,2]}(x)\chi_{[0,1]}(y)\int_0^1e^{2\pi i\lambda t}~dt\\
&=\frac{1}{2\sqrt{2}\pi i\lambda}(e^{2\pi i\lambda}-1)\chi_{[0,2]}(x)\chi_{[0,1]}(y)\\
&=\frac{1}{\sqrt{2}}e^{\pi i\lambda}\sinc(\lambda)\chi_{[0,2]}(x)\chi_{[0,1]}(y).
\end{align*}
Hence, \eqref{8} reduces to 
\begin{align*}
K_{\p_1^\lambda}^\lambda(\xi,\eta)&=\frac{1}{\sqrt{2}}e^{\pi i\lambda}\sinc(\lambda)\chi_{[0,1]}(\eta-\xi)\int_0^2e^{\pi i\lambda x(\xi+\eta)}~dx\\
&=\sqrt{2}e^{\pi i\lambda}\sinc(\lambda)e^{\pi i\lambda(\xi+\eta)}\sinc(\lambda(\xi+\eta))\chi_{[0,1]}(\eta-\xi).
\end{align*}
Let $n\geq 2$. As $\widehat{\p}_n(\lambda)=\big(\widehat{\p}_1\big)^n$, the kernel of $\widehat{\p}_n(\lambda)$ is given by
\begin{align*}
K_{\p_n^\lambda}^\lambda(\xi,\eta)&=\int_\R\int_\R\cdots\int_\R K_{\p_1^\lambda}^\lambda(\xi,y_1)K_{\p_1^\lambda}^\lambda(y_1,y_2)\cdots K_{\p_1^\lambda}^\lambda(y_{n-2},y_{n-1})K_{\p_1^\lambda}^\lambda(y_{n-1},\eta)~dy_{n-1}\cdots dy_2dy_1\\
=\big(\sqrt{2}e^{\pi i\lambda}&\sinc(\lambda)\big)^ne^{\pi i\lambda(\xi+\eta)}\int_\R\cdots\int_\R e^{2\pi i\lambda(y_1+\cdots+y_{n-1})}\sinc(\lambda(\xi+y_1))\sinc(\lambda(y_1+y_2))\times\\
&\cdots\sinc(\lambda(y_{n-2}+y_{n-1}))\sinc(\lambda(y_{n-1}+\eta))\chi_{[0,1]}(y_1-\xi)\chi_{[0,1]}(y_2-y_1)\times\\
&\cdots\chi_{[0,1]}(y_{n-1}-y_{n-2})\chi_{[0,1]}(\eta-y_{n-1})~dy_{n-1}\cdots dy_1\\
=\big(\sqrt{2}e^{\pi i\lambda}&\sinc(\lambda)\big)^ne^{\pi i\lambda(\xi+\eta)}\int_\R\bigg(\int_\R\cdots\int_\R e^{2\pi i\lambda(y_1+\cdots+y_{n-2})}\sinc(\lambda(\xi+y_1))\cdots\sinc(\lambda(y_{n-2}+y_{n-1}))\times\\
&\chi_{[0,1]}(y_1-\xi)\cdots\chi_{[0,1]}(y_{n-1}-y_{n-2})~dy_{n-2}\cdots dy_1\bigg)e^{2\pi i\lambda y_{n-1}}\times\\
&\sinc(\lambda(y_{n-1}+\eta))\chi_{[0,1]}(\eta-y_{n-1})~dy_{n-1},
\end{align*}
by an application of Fubini's theorem. But the term inside the bracket is $\frac{e^{-\pi i\lambda(\xi+y_{n-1})}}{(\sqrt{2}e^{\pi i\lambda}\sinc(\lambda))^{n-1}}K^\lambda_{\p^\lambda_{n-1}}(\xi,y_{n-1})$.
Hence, it follows that
\begin{align*}
K_{\p_n^\lambda}^\lambda(\xi,\eta)&=\sqrt{2}e^{\pi i\lambda}\sinc(\lambda)e^{\pi i\lambda(\xi+\eta)}\int_\R e^{-\pi i\lambda(\xi+y_{n-1})}K_{\p_{n-1}^\lambda}^\lambda(\xi,y_{n-1})e^{2\pi i\lambda y_{n-1}}\times\\
&\sinc(\lambda(y_{n-1}+\eta))\chi_{[0,1]}(\eta-y_{n-1})~dy_{n-1}\\
&=\sqrt{2}e^{\pi i\lambda}\sinc(\lambda)e^{\pi i\lambda\eta}\int_\R e^{\pi i\lambda y_{n-1}}\sinc(\lambda(y_{n-1}+\eta))K_{\p_{n-1}^\lambda}^\lambda(\xi,y_{n-1})\chi_{[0,1]}(\eta-y_{n-1})~dy_{n-1}.
\end{align*}
A change of variables leads to 
\begin{align*}
K_{\p_n^\lambda}^\lambda(\xi,\eta)=\sqrt{2}e^{\pi i\lambda}\sinc(\lambda)e^{2\pi i\lambda\eta}\int_0^1e^{-\pi i\lambda y_{n-1}}\sinc(\lambda(2\eta-y_{n-1}))K_{\p_{n-1}^\lambda}^\lambda(\xi,\eta-y_{n-1})~dy_{n-1}.
\end{align*}
\end{proof}
\section{Action of Vector Fields on B-Splines on $\H$}
For $(u,v,s)\in\H$, we define the finite differences of discrete Heisenberg right translations 
\begin{align*}
\nabla_1R_{(u,v,s)} &:=R_{(u,v,s)}-R_{(u-2,v,s)},\\ 
\nabla_2R_{(u,v,s)} &:=R_{(u,v,s)}-R_{(u,v-1,s)},\\
\nabla_3R_{(u,v,s)} &:=R_{(u,v,s)}-R_{(u,v,s-1)}. 
\end{align*}

We prove the iteration formulae for the action of the Heisenberg vector fields on $\p_n$ in terms of the above finite differences.
\begin{theorem}
Let $\mathbf{X}=\frac{\partial}{\partial x}-\frac{1}{2}y\frac{\partial}{\partial t}$, $\mathbf{Y}=\frac{\partial}{\partial y}+\frac{1}{2}x\frac{\partial}{\partial t}$ and $\mathbf{T}=\frac{\partial}{\partial t}$ denote the three left-invariant vector fields generating the Lie algebra $\mathfrak{h}$ of the Lie group $\H$. Then, for the $(n+1)$-th order $B$-spline on $\H$, we have the following:
\begin{align*}
(\mathbf{X}\p_{n+1})(x,y,t)=\frac{1}{\sqrt{2}}\int_0^1\int_0^1(\nabla_1R_{(0,-v,-s)})\p_n(x,y,t)~&dsdv+\frac{1}{2\sqrt{2}}\int_0^2\int_0^1(v-y)\times\\
&(\nabla_3R_{(-u,-v,0)})\p_n(x,y,t)~dvdu,\numberthis \label{13}
\end{align*}
\begin{align*}
(\mathbf{Y}\p_{n+1})(x,y,t)=\frac{1}{\sqrt{2}}\int_0^2\int_0^1(\nabla_2R_{(-u,0,-s)})\p_n(x,y,t)~&dsdu+\frac{1}{2\sqrt{2}}\int_0^2\int_0^1(x-u)\times\\
&(\nabla_3R_{(-u,-v,0)})\p_n(x,y,t)~dvdu\numberthis \label{14}
\end{align*}
and
\begin{align*}
(\mathbf{T}\p_{n+1})(x,y,t)=\frac{1}{\sqrt{2}}\int_0^2\int_0^1(\nabla_3R_{(-u,-v,0)})\p_n(x,y,t)~dvdu,\numberthis \label{15}\ \ \ \ \ \ x,y,t\in\R,\ n\geq 1.
\end{align*}
\end{theorem}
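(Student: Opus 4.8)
The plan is to differentiate the recursion \eqref{5} under the integral sign and then trade the resulting slot-derivatives of $\p_n$ for boundary differences by integrating by parts in the three integration variables. Write $\Phi(u,v,s):=\p_n\big(x-u,\,y-v,\,t-s+\tfrac12(vx-uy)\big)$ for the integrand, so that $\p_{n+1}=\tfrac{1}{\sqrt2}\int_0^2\int_0^1\int_0^1\Phi\,ds\,dv\,du$ by \eqref{5}. Since for $n\geq1$ one has $\p_{n+1}=\p_n*\p_1$, the $(n+1)$-th spline is a convolution of at least two characteristic functions and is regular enough in the relevant directions for the interchange of differentiation and integration to be legitimate; I would settle this point first, after which the asserted identities are understood to hold for almost every $(x,y,t)$.

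The distinctive feature of the Heisenberg setting, and the reason the computation is more delicate than in $\R^3$, is that $x$ and $y$ enter $\Phi$ not only through the first two slots of $\p_n$ but also through the central slot via the twist $\tfrac12(vx-uy)$. Writing $\p_n^{(i)}$ for the partial derivative of $\p_n$ in its $i$-th argument, the chain rule gives $\partial_x\Phi=\p_n^{(1)}+\tfrac{v}{2}\p_n^{(3)}$, $\partial_y\Phi=\p_n^{(2)}-\tfrac{u}{2}\p_n^{(3)}$ and $\partial_t\Phi=\p_n^{(3)}$. Substituting these into $\mathbf{X}=\tfrac{\partial}{\partial x}-\tfrac12 y\tfrac{\partial}{\partial t}$, $\mathbf{Y}=\tfrac{\partial}{\partial y}+\tfrac12 x\tfrac{\partial}{\partial t}$ and $\mathbf{T}=\tfrac{\partial}{\partial t}$ produces integrands that are combinations of the central-slot derivative $\p_n^{(3)}$ with $\p_n^{(1)}$, respectively $\p_n^{(2)}$.

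The decisive observation is that each slot-derivative is itself a total derivative of $\Phi$ in one of the integration variables: $\partial_s\Phi=-\p_n^{(3)}$, $\partial_u\Phi=-\p_n^{(1)}-\tfrac{y}{2}\p_n^{(3)}$ and $\partial_v\Phi=-\p_n^{(2)}+\tfrac{x}{2}\p_n^{(3)}$. Integrating by parts over $s\in[0,1]$, $u\in[0,2]$ and $v\in[0,1]$ therefore replaces $\Phi$ by its values at the endpoints, and a direct comparison with the group law identifies these endpoint differences as $(\nabla_3R_{(-u,-v,0)})\p_n$, $(\nabla_1R_{(0,-v,-s)})\p_n$ and $(\nabla_2R_{(-u,0,-s)})\p_n$, respectively. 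For $\mathbf{T}$ only the $s$-integration by parts is involved and no central-slot coupling occurs, so \eqref{15} falls out at once; I would record this as the clean base case and present it first.

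The main obstacle is the bookkeeping for $\mathbf{X}$ and $\mathbf{Y}$. Here the drift term $-\tfrac12 y\,\partial_t$ (respectively $+\tfrac12 x\,\partial_t$) of the vector field must be combined with the central-slot contribution $\p_n^{(3)}$ generated by the twist before the integration by parts in $u$ (respectively $v$) is carried out, because that same $u$- (respectively $v$-) integration by parts itself emits a further $\p_n^{(3)}$ term through $\partial_u\Phi$ (respectively $\partial_v\Phi$). Keeping these several $\p_n^{(3)}$ contributions aligned and checking that they collapse into a single $\nabla_3R_{(-u,-v,0)}$ difference carrying exactly the weight displayed in \eqref{13} and \eqref{14} is the one calculation I would write out in full; the $\nabla_1$- and $\nabla_2$-terms, by contrast, come directly from the endpoint evaluations and require no further work.
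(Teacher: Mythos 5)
Your route is the same as the paper's: differentiate the recursion \eqref{5} under the integral sign, trade the slot derivatives of $\p_n$ for derivatives in the integration variables $u,v,s$, and convert those via the fundamental theorem of calculus into the boundary differences $\nabla_1,\nabla_2,\nabla_3$; your chain-rule identities, including the twist contributions in $\partial_u\Phi=-\p_n^{(1)}-\tfrac{y}{2}\p_n^{(3)}$ and $\partial_v\Phi=-\p_n^{(2)}+\tfrac{x}{2}\p_n^{(3)}$, are all correct, and the $\mathbf{T}$ case \eqref{15} goes through exactly as you say. The genuine gap is the one computation you deferred: the several $\p_n^{(3)}$ contributions do \emph{not} collapse to the weights displayed in \eqref{13} and \eqref{14}. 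From your own identities,
\begin{align*}
\partial_x\Phi=\p_n^{(1)}+\tfrac{v}{2}\p_n^{(3)}=-\partial_u\Phi-\tfrac{v-y}{2}\,\partial_s\Phi,
\qquad
\partial_x\Phi-\tfrac{y}{2}\,\partial_t\Phi=-\partial_u\Phi-\tfrac{v-2y}{2}\,\partial_s\Phi,
\end{align*}
so after integrating over $Q$ the weight $(v-y)$ belongs to the plain partial $\frac{\partial}{\partial x}\p_{n+1}$, while your plan for $\mathbf{X}\p_{n+1}=\big(\partial_x-\tfrac{y}{2}\partial_t\big)\p_{n+1}$ necessarily terminates with the weight $(v-2y)$; likewise for $\mathbf{Y}$ one lands on $(2x-u)$ rather than $(x-u)$. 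In other words, carried out faithfully, your argument proves that the right-hand sides of \eqref{13} and \eqref{14} equal $\frac{\partial}{\partial x}\p_{n+1}$ and $\frac{\partial}{\partial y}\p_{n+1}$, and it cannot end at the stated formulas for $\mathbf{X}\p_{n+1}$ and $\mathbf{Y}\p_{n+1}$; asserting that the terms collapse ``with exactly the displayed weight'' is precisely where the proposal breaks.

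The comparison with the paper explains the mismatch. The paper's proof passes through intermediate formulas for $\frac{\partial}{\partial x}\p_{n+1}$ (weight $v$) and $\frac{\partial}{\partial y}\p_{n+1}$ (weight $-u$) in which, in effect, $\partial_u\Phi=-\p_n^{(1)}$ and $\partial_v\Phi=-\p_n^{(2)}$ are used --- i.e., exactly the twist terms $-\tfrac{y}{2}\p_n^{(3)}$ and $+\tfrac{x}{2}\p_n^{(3)}$ that you correctly keep are dropped there; the drift parts of $\mathbf{X}$ and $\mathbf{Y}$ then reinstate those dropped contributions and produce $(v-y)$ and $(x-u)$. A quick sanity check that your identities, and not the displayed combination, are the correct ones: apply the smoothing in \eqref{5} once with $\p_n$ replaced by $f(a,b,c)=c$. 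The output is $F(x,y,t)=\tfrac{1}{\sqrt2}\big(2t-1+\tfrac{x}{2}-y\big)$; the right-hand side of \eqref{13} then evaluates to the constant $\tfrac{1}{2\sqrt2}=\partial_xF$, whereas $\mathbf{X}F=\tfrac{1}{\sqrt2}\big(\tfrac12-y\big)$ depends on $y$. So do not try to force the deferred bookkeeping onto the stated weights: either state the conclusion for $\partial_x,\partial_y$ on the left (weights $(v-y)$, $(x-u)$, your first identity above), or keep $\mathbf{X},\mathbf{Y}$ on the left with the corrected weights $(v-2y)$, $(2x-u)$.
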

\begin{proof}
By definition of the vector fields, we have that 
\begin{gather*}
\mathbf{X}\p_{n+1}=\frac{\partial}{\partial x}\p_{n+1}-\frac{1}{2}y\frac{\partial}{\partial t}\p_{n+1},\\
\mathbf{Y}\p_{n+1}=\frac{\partial}{\partial y}\p_{n+1}+\frac{1}{2}x\frac{\partial}{\partial t}\p_{n+1},\\ 
\mathbf{T}\p_{n+1}=\frac{\partial}{\partial t}\p_{n+1}, \quad n\geq 1. 
\end{gather*}
Using \eqref{5}, we get
\begin{align*}
\frac{\partial}{\partial x}\p_{n+1}(x,y,t)&=\frac{1}{\sqrt{2}}\int_Q\frac{\partial}{\partial x}\p_n(x-u,y-v,t-s+\tfrac{1}{2}(vx-uy))~dudvds\\
&=\frac{1}{\sqrt{2}}\int_0^1\int_0^1-\bigg(\int_0^2\frac{\partial}{\partial u}\p_n(x-u,y-v,t-s+\tfrac{1}{2}(vx-uy))~du\bigg)dsdv+\\
&\frac{1}{\sqrt{2}}\int_0^2\int_0^1-\frac{v}{2}\bigg(\int_0^1\frac{\partial}{\partial s}\p_n(x-u,y-v,t-s+\tfrac{1}{2}(vx-uy))~ds\bigg)dvdu\\
=\frac{1}{\sqrt{2}}\int_0^1\int_0^1-&\big(\p_n(x-2,y-v,t-s+\tfrac{1}{2}(vx-2y))-\p_n(x,y-v,t-s+\tfrac{1}{2}vx)\big)dsdv+\\
\frac{1}{2\sqrt{2}}\int_0^2\int_0^1-v&\big(\p_n(x-u,y-v,t-1+\tfrac{1}{2}(vx-uy))-\p_n(x-u,y-v,t+\tfrac{1}{2}(vx-uy))\big)dvdu\\
=\frac{1}{\sqrt{2}}\int_0^1&\int_0^1-\big(\p_n((x,y,t)(-2,-v,-s))-\p_n((x,y,t)(0,-v,-s))\big)dsdv\ +\\
&\frac{1}{2\sqrt{2}}\int_0^2\int_0^1-v\big(\p_n((x,y,t)(-u,-v,-1))-\p_n((x,y,t)(-u,-v,0))\big)dvdu\\
=\frac{1}{\sqrt{2}}\int_0^1\int_0^1&-\big(R_{(-2,-v,-s)}\p_n(x,y,t)-R_{(0,-v,-s)}\p_n(x,y,t)\big)dsdv\ +\\
&\frac{1}{2\sqrt{2}}\int_0^2\int_0^1-v\big(R_{(-u,-v,-1)}\p_n(x,y,t)-R_{(-u,-v,0)}\p_n(x,y,t)\big)dvdu\\
=\frac{1}{\sqrt{2}}\int_0^1\int_0^1&(\nabla_1R_{(0,-v,-s)})\p_n(x,y,t)~dsdv+\frac{1}{2\sqrt{2}}\int_0^2\int_0^1v(\nabla_3R_{(-u,-v,0)})\p_n(x,y,t)~dvdu.
\end{align*}
Proceeding in a similar way, we obtain
\begin{align*}
\frac{\partial}{\partial y}\p_{n+1}(x,y,t)=\frac{1}{\sqrt{2}}\int_0^2\int_0^1&(\nabla_2R_{(-u,0,-s)})\p_n(x,y,t)~dsdu-\\
&\frac{1}{2\sqrt{2}}\int_0^2\int_0^1u(\nabla_3R_{(-u,-v,0)})\p_n(x,y,t)~dvdu\numberthis \label{16}
\end{align*}
and
\begin{align}\label{17}
\frac{\partial}{\partial t}\p_{n+1}(x,y,t)=\frac{1}{\sqrt{2}}\int_0^2\int_0^1(\nabla_3R_{(-u,-v,0)})\p_n(x,y,t)~dvdu.
\end{align}
Therefore,
\begin{align*}
(\mathbf{X}\p_{n+1})(x,y,t)=\frac{1}{\sqrt{2}}\int_0^1\int_0^1&(\nabla_1R_{(0,-v,-s)})\p_n(x,y,t)~dsdv\ +\\
\frac{1}{2\sqrt{2}}\int_0^2\int_0^1v(\nabla_3R_{(-u,-v,0)})\p_n(x,y,t)~dvdu\ & -\frac{y}{2\sqrt{2}}\int_0^2\int_0^1(\nabla_3R_{(-u,-v,0)})\p_n(x,y,t)~dvdu\\
=\frac{1}{\sqrt{2}}\int_0^1\int_0^1(\nabla_1R_{(0,-v,-s)})\p_n(x,y,t)~dsdv\ +&\frac{1}{2\sqrt{2}}\int_0^2\int_0^1(v-y)(\nabla_3R_{(-u,-v,0)})\p_n(x,y,t)~dvdu,
\end{align*}
proving \eqref{13}. Similarly, using \eqref{16} and \eqref{17} in $\mathbf{Y}$ and $\mathbf{T}$, we obtain \eqref{14} and \eqref{15}.
\end{proof}
\section{Lack of a symmetric property for $B$-splines on $\H$} 
We know that the classical $B$-spline of order $n$ posses the symmetry property
\[
T_{-\frac{n}{2}}B_n(x)=T_{-\frac{n}{2}}B_n(-x),\quad \forall\ x\in\R. 
\]
It is natural to expect that the translations in the first two variables in a B-splines on $\H$ act in a similar way. Hence, we ask whether there exists a sequence $\{\alpha_n\}_{n\in\N}$ such that
\begin{align}\label{18}
L_{(-n,-\frac{n}{2},-\alpha_n)}\p_n(x,y,t)=L_{(-n,-\frac{n}{2},-\alpha_n)}\p_n(-x,-y,-t),
\end{align}
as $(x,y,t)^{-1}=(-x,-y,-t)$, for $(x,y,t)\in\H$. This is true when $n=1$, namely when $\alpha_1=\frac{1}{2}$. However, for $n\geq 2$, we prove the following
\begin{theorem}
There does not exist any sequence $\{\alpha_n\}_{n\in\N}$ such that
\begin{align*}
L_{(-n,-\frac{n}{2},-\alpha_n)}\p_n(x,y,t)=L_{(-n,-\frac{n}{2},-\alpha_n)}\p_n(-x,-y,-t),
\end{align*}
for $n\geq 2$.
\end{theorem}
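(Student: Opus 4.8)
The plan is to replace the two–sided comparison in \eqref{18} by a single intrinsic symmetry of $\p_n$. Writing $b:=(n,\tfrac n2,\alpha_n)=(-n,-\tfrac n2,-\alpha_n)^{-1}$, we have $L_{(-n,-\frac n2,-\alpha_n)}\p_n(z)=\p_n(bz)$, so, since $(x,y,t)^{-1}=(-x,-y,-t)$, condition \eqref{18} reads $\p_n(bz)=\p_n(bz^{-1})$ for all $z\in\H$. Substituting $z\mapsto b^{-1}z$ turns this into $\p_n(z)=\p_n(bz^{-1}b)$, and a direct evaluation of the triple Heisenberg product gives $b\,(x,y,t)^{-1}b=(2n-x,\,n-y,\,2\alpha_n-t)$. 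Hence \eqref{18} is equivalent to the affine reflection symmetry
\[
\p_n(x,y,t)=\p_n(2n-x,\,n-y,\,2\alpha_n-t),\qquad\forall\,(x,y,t)\in\H,\qquad(\dagger)
\]
whose first two coordinates are the reflection about the centre $(n,\tfrac n2)$ of the base $[0,2n]\times[0,n]$; thus $(\dagger)$ is the natural analogue of $B_n(s)=B_n(n-s)$, and the task is to show the twist in the third coordinate obstructs it for $n\ge2$. I would treat $(\dagger)$, rather than \eqref{18}, as the statement to be refuted.

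Next I would unwind the $n$-fold convolution $\p_n=\p_1\ast\cdots\ast\p_1$ into an integral over $Q^n$. Writing a point as $(u,v,s)=((u_i,v_i,s_i))_{i=1}^n$, the product $(u_1,v_1,s_1)\cdots(u_n,v_n,s_n)$ equals $(\sum u_i,\sum v_i,\sum s_i+\omega(u,v))$ with the symplectic area $\omega(u,v):=\tfrac12\sum_{i<j}(u_jv_i-v_ju_i)$. Integrating out the $s_i\in[0,1]$, whose sum has density $B_n$, gives for fixed $(x,y)$
\[
\p_n(x,y,t)=\frac1{(\sqrt2)^n}\int_{A_{x,y}}B_n\big(t-\omega(u,v)\big)\,dS(u,v),\qquad
A_{x,y}=\Big\{u\in[0,2]^n,\ v\in[0,1]^n:\ \textstyle\sum u_i=x,\ \sum v_i=y\Big\},
\]
where $dS$ is the measure cut out by the two linear constraints. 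The substitution $u_i\mapsto 2-u_i$, $v_i\mapsto 1-v_i$ carries $A_{2n-x,n-y}$ onto $A_{x,y}$ with unit Jacobian and transforms the area as $\omega\mapsto\omega+L$, where $L(u,v)=\sum_k c_k v_k-\tfrac12\sum_k c_k u_k$ is linear with $c_k=2k-1-n$ and, crucially, $\sum_k c_k=0$. Since $\widehat{B_n}$ vanishes only on $\z\setminus\{0\}$, convolution by $B_n$ is injective on compactly supported measures; applying this in the $t$–variable shows that $(\dagger)$ is equivalent to the family of identities (one for each $(x,y)$) asserting that the push-forward of $dS$ on $A_{x,y}$ under $\omega$ coincides with its push-forward under $(u,v)\mapsto(2\alpha_n-n)-\omega(u,v)-L(u,v)$.

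Now I would extract contradictions by matching moments. Permutation symmetry of $dS$ on $A_{x,y}$ gives $\int_{A_{x,y}}\omega\,dS=0$, and $\int_{A_{x,y}}L\,dS=0$ because $\sum_k c_k=0$; comparing means therefore forces $(2\alpha_n-n)\int_{A_{x,y}}dS=0$, i.e.\ $\alpha_n=\tfrac n2$. This is consistent with $\alpha_1=\tfrac12$ and with the centre $\tfrac n2$ of the $t$-support interval, and it is exactly why no first–order obstruction exists (the paper's explicit $n=2$ relation $x(\tfrac12-v)+y(u-1)=$ const is precisely this forced mean-matching). With $\alpha_n=\tfrac n2$ the push-forwards become the laws of $\omega$ and of $-(\omega+L)$, and comparing their second moments reduces $(\dagger)$ to
\[
\int_{A_{x,y}}\big(2\,\omega\,L+L^2\big)\,dS=0\qquad\text{for every }(x,y).
\]
The theorem then follows as soon as this defect is shown to be nonzero for some $(x,y)$ when $n\ge2$.

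The main obstacle is precisely this nonvanishing, and it is genuinely a \emph{local} statement. A short computation over the full box shows $\int_{[0,2]^n\times[0,1]^n}(2\omega L+L^2)\,du\,dv=0$, so the defect averages to zero in $(x,y)$ and cannot be settled by a global estimate; one must localise. For $n=2$ this is already conclusive: at $(x,y)=(1,1)$, parametrising $A_{1,1}$ by $u_1=\tau,\ u_2=1-\tau,\ v_1=\eta,\ v_2=1-\eta$ gives $\omega=\tfrac12(\eta-\tau)$, $L=\tfrac12+\tau-2\eta$, and $\int_0^1\!\int_0^1(2\omega L+L^2)\,d\tau\,d\eta=\tfrac16\neq0$. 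For general $n$ I would pair the defect $(x,y)\mapsto\int_{A_{x,y}}(2\omega L+L^2)\,dS$ against a weight such as $y=\sum_i v_i$, reducing the claim $``\,\text{defect}\not\equiv 0\,"$ to showing that $\int_{[0,2]^n\times[0,1]^n}(2\omega L+L^2)\,\big(\sum_i v_i\big)\,du\,dv$ — a polynomial in $n$ computable from the independent box moments — does not vanish for $n\ge2$. Establishing this last nonvanishing uniformly in $n$ is the step I expect to require the most care.
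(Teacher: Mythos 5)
Your argument is correct, and it takes a genuinely different route from the paper's. The paper proves only the $n=2$ case: it expands $L_{(-2,-1,-\alpha_2)}\p_2$ at $(x,y,t)$ and at $(-x,-y,-t)$ via the recursion \eqref{5} together with the $n=1$ symmetry, and then argues that \eqref{19} would force the third arguments of the integrands in \eqref{20} and \eqref{21} to agree for a.e.\ $(u,v)$, yielding the impossible relation $x(\tfrac{1}{2}-v)+y(u-1)=2(\alpha_1-\alpha_2)$. You instead recast \eqref{18} as the intrinsic reflection symmetry $\p_n(x,y,t)=\p_n(2n-x,n-y,2\alpha_n-t)$ (I checked $b z^{-1} b=(2n-x,n-y,2\alpha_n-t)$), disintegrate $\p_n$ over the base as $B_n$-convolutions of the pushforward of the symplectic area $\omega$, and compare moments after invoking injectivity of convolution by $B_n$ on compactly supported measures; the transformation $\omega\mapsto\omega+L$ with $\sum_k c_k=0$, the forced value $\alpha_n=\tfrac{n}{2}$, and the $n=2$ defect $\int_0^1\int_0^1(2\omega L+L^2)\,d\tau\,d\eta=\tfrac{1}{6}$ all check out. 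Since the theorem asserts nonexistence of a sequence making \eqref{18} hold for all $n\geq2$, your complete refutation at $n=2$ (for every $\alpha_2$) already proves it, exactly as the paper's own $n=2$ argument does; the general-$n$ nonvanishing you flag as open is a strengthening (per-$n$ failure) that neither the statement nor the paper requires. Your route is longer but buys rigor and structure: the moment comparison avoids the paper's informal jump from equality of the integrals to equality of the integrands' third components, and it explains why no first-order obstruction exists. One small patch: since the symmetry is a priori an a.e.\ identity, note that the defect $(x,y)\mapsto\int_{A_{x,y}}(2\omega L+L^2)\,dS$ is continuous (piecewise polynomial), so its nonvanishing at $(1,1)$ gives failure on a set of positive measure.
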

\begin{proof}
Let $n=2$. Suppose \eqref{18} were true. Then we have
\begin{align}\label{19}
L_{(-2,-1,-\alpha_2)}\p_2(x,y,t)=L_{(-2,-1,-\alpha_2)}\p_2(-x,-y,-t).
\end{align}
In other words,
\begin{align*}
\p_2(x+2,y+1,t+\alpha_2+(\tfrac{x}{2}-y))=\p_2(2-x,1-y,\alpha_2-t-(\tfrac{x}{2}-y)).
\end{align*}
Using \eqref{5}, we get
\begin{align*}
L_{(-2,-1,-\alpha_2)}\p_2(x,y,t)=&\frac{1}{\sqrt{2}}\int_QL_{(-1,-\frac{1}{2},-\alpha_1)}\p_1(x+1-u,y+\tfrac{1}{2}-v,\\
&t+\alpha_2-\alpha_1-s+\tfrac{1}{2}(v(x+1)-u(y+\tfrac{1}{2}))+\tfrac{1}{2}(\tfrac{x}{2}-y))~dudvds\\
=\frac{1}{\sqrt{2}}\int_Q&L_{(-1,-\tfrac{1}{2},-\alpha_1)}\p_1(-x-1+u,-y-\tfrac{1}{2}+v,\\
&-t-\alpha_2+\alpha_1+s-\tfrac{1}{2}(v(x+1)-u(y+\tfrac{1}{2}))-\tfrac{1}{2}(\tfrac{x}{2}-y))~dudvds,
\end{align*}
as \eqref{18} is true for $n=1$. Simplifying further, we obtain
\begin{align*}
L_{(-2,-1,-\alpha_2)}\p_2(x,y,t)=&\frac{1}{\sqrt{2}}\int_Q\p_1(-x+u,-y+v,\\
&-t-\alpha_2+2\alpha_1+s+\tfrac{1}{2}(u(y+1)-v(x+2))-(\tfrac{x}{2}-y))~dudvds\\
=\frac{1}{\sqrt{2}}&\int_Q\p_1((2,1,-\alpha_2+2\alpha_1+1)(-x-2+u,-y-1+v,\\
&-t+s-1+\tfrac{1}{2}(uy-vx)))~dudvds\\
=\frac{1}{\sqrt{2}}\int_Q&\p_1((2,1,-\alpha_2+2\alpha_1+1)(-x-u,-y-v,\\
&-t-s-\tfrac{1}{2}(uy-vx)-\tfrac{1}{2}(x-2y)))~dudvds,
\end{align*}
by applying a change of variables. Therefore,
\begin{align*}
L_{(-2,-1,-\alpha_2)}\p_2(x,y,t)=&\frac{1}{\sqrt{2}}\int_Q\p_1(-x-u+2,-y-v+1,\\
&-t-s-\alpha_2+2\alpha_1+1+\tfrac{1}{2}(v(x+2)-u(y+1))-(x-2y))~dudvds.\numberthis \label{20}
\end{align*}
On the other hand, using \eqref{5}, we get
\begin{align*}
L_{(-2,-1,-\alpha_2)}\p_2(-x,-y,-t)&=\frac{1}{\sqrt{2}}\int_Q\p_1(2-x-u,1-y-v,\\
&\alpha_2-t-(\tfrac{x}{2}-y)-s+\tfrac{1}{2}(v(2-x)-u(1-y)))~dudvds.\numberthis \label{21}
\end{align*}
We observe that the limits of the integral and the values in the first two components are the same in \eqref{20} and \eqref{21}. Suppose \eqref{19} were true. Then, the third components of \eqref{20} and \eqref{21} would also have to be the same for a.e. $ u\in[0,2],\ v\in[0,1]$. This, in turn, would lead to
\begin{align*}
x(\tfrac{1}{2}-v)+y(u-1)=2(\alpha_1-\alpha_2)=:k_0,
\end{align*}
for some constant $k_0$, which is impossible.
\end{proof}
We have observed that complications of the fundamental results of $\p_n$ is due to the non-abelian structure on $\H$ which is a lot harder than the classical abelian setting in $\R$. We conclude that it is really worth to go in for further investigations of these $B$-splines $\p_n$ although it might be time consuming.
\section*{Acknowledgement}
We thank Professor G.B. Folland for his valuable suggestions in obtaining the formula \eqref{30} in the paper. Initial part of this work was done while one of the authors (R.R) was visiting the Technical University of Munich, Germany, from October $2019$ to September $2020$. She sincerely thanks Professor Massimo Fornasier and Professor Peter Massopust, TUM, for their kind invitation and excellent hospitality during the entire period of her visit.

\bibliographystyle{amsplain}
\bibliography{Hsplines}
\end{document}